\documentclass[11pt, twosided]{article}
\usepackage{graphicx}
\usepackage{amsfonts,amsbsy,amssymb,amsmath}
\allowdisplaybreaks
\usepackage{cite}
\usepackage{graphics}
\def\bpsp{\begin{pspicture}}
\def\epsp{\end{pspicture}}

\newtheorem{theorem}{Theorem}[section]
\newtheorem{remark}[theorem]{Remark}
\newtheorem{example}[theorem]{Example}
\newtheorem{lemma}[theorem]{Lemma}
\newtheorem{corollary}[theorem]{Corollary}
\newtheorem{definition}[theorem]{Definition}
\newtheorem{proposition}[theorem]{Proposition}

\newtheorem{note}{Note}
\newtheorem{case}{Case}

\newtheorem{conjecture}{Conjecture}
\newtheorem{question}{Question}

\newcommand{\bea}{\begin{eqnarray}}
\newcommand{\eea}{\end{eqnarray}}
\newcommand{\beq}{\begin{eqnarray*}}
\newcommand{\eeq}{\end{eqnarray*}}

\def\m4{\mbox{\rm ~(mod $4$)}}

\def \bd{\begin{definition}}
\def \ed{\end{definition}}
\def \bqu{\begin{question}}
\def \equ{\end{question}}
\def \bcc{\begin{conjecture}}
\def \ecc{\end{conjecture}}
\def \bt{\begin{theorem}}
\def \et{\end{theorem}}
\def \bl{\begin{lemma}}
\def \el{\end{lemma}}
\def \bc{\begin{corollary}}
\def \ec{\end{corollary}}
\def \be{\begin{equation}}
\def \ee{\end{equation}}
\def \ben{\begin{enumerate}}
\def \een{\end{enumerate}}
\def \ba{\begin{array}}
\def \ea{\end{array}}
\def \bp{\begin{proposition}}
\def \ep{\end{proposition}}
\def \bx{\begin{example}}
\def \ex{\end{example}}
\def \br{\begin{remark}}
\def \er{\end{remark}}
\def \bdsc{\begin{description}}
\def \edsc{\end{description}}

\def \bn{\begin{case}}
\def \en{\end{case}}
\def \bnt{\begin{note}}
\def \ent{\end{note}}
\def\1{1\!\!1}

\def\mm2{\mbox{\rm ~(mod $2$)}}
\def\m4{\mbox{\rm ~(mod $4$)}}

\def\qed{\nolinebreak\hfill\rule{.2cm}{.2cm}\par\addvspace{.5cm}}

\def\m{\mu}

\def\1{\textbf{1}}
\def\0{\textbf{0}}

\begin{document}

\title{Graphs with $\{P_3,P_4,P_5\}$-factors in terms of size and spectral radius}
\author{Zahoor Iqbal Bhat $ ^{a} $, S. Pirzada$ ^{b} $\\
$^{a,b}${\em Department of Mathematics, University of Kashmir, Srinagar, India}\\
$^{a}$\texttt{zahoorbutt9642@gmail.com }; $ ^{b} $\texttt{pirzadasd@kashmiruniversity.ac.in}\\
}
\date{}

\pagestyle{myheadings} \markboth{Pirzada, Zahoor}{}
\maketitle
\vskip 5mm
\noindent{\footnotesize \bf Abstract.}
\vskip 3mm
\noindent Let $G$ be a connected graph of order $n$. A $\{P_3,P_4,P_5\}$-factor is a spanning subgraph $H$ of $G$ such that every component of $H$ is isomorphic to an element of
$\{P_3,P_4,P_5\}$. In this paper, we establish a sufficient condition on the size of the graph  $G$ with minimum degree $\delta$ to have a $\{P_3, P_4, P_5\}$-factor. Subsequently, we provide another sufficient
condition on the adjacency spectral radius, ensuring that a connected
graph $G$ with minimum degree $\delta$ contains a $\{P_3, P_4, P_5\}$-factor.

\noindent{\footnotesize Keywords: Factors , adjacency matrix, spectral radius, minimum degree}

\vskip 3mm
\noindent {\footnotesize AMS subject classification: 05C50, 05C12, 15A18.}

\section{Introduction}

Throughout, we consider finite simple graphs.
Let $G$ be a graph with vertex set $V(G)$ and edge set $E(G)$. Order and size of $G$ are respectively $|V|=n$ and $|E|=m$. The adjacency matrix $A(G)=[a_{ij}]$ of $G$ is an $n\times n$ matrix with $a_{ij}$ equal to $1$ or $0$ according as  $v_i$ and $v_j$ are adjacent or not. The eigenvalues of $G$ are the eigenvalues of its adjacency matrix $A(G)$. Let $\lambda_1, \lambda_2, \dots, \lambda_n$ be the eigenvalues of $A(G)$ in  non-increasing order. $\lambda_1$ is  the spectral radius of $G$ and is  written as $\rho(G)$. For a vertex $v \in V(G)$, the neighborhood of $v$ and the degree of $v$ in $G$
are denoted by $N_G(v)$ and $d_G(v)$, respectively. Let $i(G)$ denote the number of isolated vertices in $G$. For a subset $S \subseteq V(G)$, let $G[S]$ denote the subgraph of $G$ induced by $S$, and let $G-S$ denote the subgraph induced by $V(G)\setminus S$.
For two vertex-disjoint graphs $G_1$ and $G_2$, their union is denoted by $G_1 \cup G_2$. The disjoint union of $t$ copies of $G$, where $t$ is a positive integer, is denoted by $tG$.
The join $G_1 \vee G_2$ is the graph obtained by joining each vertex of $G_1$ to every vertex of $G_2$. We denote the path, cycle, star and complete graph of order $n$ by
$P_n$, $C_n$, $K_{1,n-1}$ and $K_n$, respectively. For a real number $c$ be a real number, $\lfloor c \rfloor$ denotes the greatest integer less than or equal to $c$.

\noindent Let $\mathcal{H}$ be a set of connected graphs.
A spanning subgraph $H$ of $G$ is called an $\mathcal{H}$-factor if every component of $H$
belongs to $\mathcal{H}$.
If $\mathcal{H}=\{P_3,P_4,P_5\}$, then an $\mathcal{H}$-factor is called a
$\{P_3,P_4,P_5\}$-factor. Write $P_{\ge k}=\{P_i \mid i\ge k\}$.
If $\mathcal{H}=P_{\ge k}$, then an $\mathcal{H}$-factor is called a $P_{\ge k}$-factor.
If $\mathcal{H}=\{K_2,C_i \mid i\ge 3\}$, then an $\mathcal{H}$-factor is called a
$\{K_2, C_i \mid i\ge 3\}$-factor.
If $\mathcal{H}=\{K_{1,j} \mid 1\le j\le k\}$, then an $\mathcal{H}$-factor is called a
$\{K_{1,j} \mid 1\le j\le k\}$-factor. Li and Miao \cite{1} and Zhou, Sun and Liu \cite{3} established several spectral radius conditions ensuring that a graph contains a $P_{\ge 2}$-factor. Subsequently, many researchers \cite{6,7} derived results concerning binding number, neighborhood conditions, and degree conditions for the existence of a $P_{\ge 3}$-factor in graphs. O \cite{9} investigated the relationship between the spectral radius and $\{K_{1,1}\}$-factors in graphs. Tutte \cite{10} characterized graphs containing a $\{K_2, C_n : n \ge 3\}$-factor. Klopp and Steffen \cite{11} examined the existence of a $\{K_{1,1}, K_{1,2}, C_m : m \ge 3\}$-factor in graphs. Amahashi and Kano \cite{12} established a necessary and sufficient condition for a graph to contain a $\{K_{1,j} : 1 \le j \le k\}$-factor, where $k \ge 2$ is an integer. Kano and Saito \cite{13} provided a sufficient condition for a graph to contain a $\{K_{1,j} : k \le j \le 2k\}$-factor, with $k \ge 2$. Furthermore, Kano, Lu and Yu \cite{14} explored the relationship between the number of isolated vertices and $\{K_{1,2}, K_{1,3}, K_5\}$-factors in graphs. Considerable attention has been devoted to identifying sufficient conditions for graphs to contain spanning subgraphs using various graph parameters such as spectral radius \cite{15,16,17}, independence number \cite{19,20}, degree conditions \cite{21}, isolated toughness \cite{23}, eigenvalues \cite{24,25}, and other parameters \cite{26,27}. Kano, Lu and Yu \cite{28} established a necessary and sufficient condition for a graph to contain a $\{K_{1,1}, K_{1,2}, \dots, K_{1,k}, T(2k+1)\}$-factor.\\

\noindent In 2010, Kano et al.~\cite{14} provided a sufficient condition for the existence of
$\{P_3, P_4, P_5\}$-factors in graphs.
\begin{theorem}{\em \cite{14}}
If a graph $G$ satisfies $i(G-S) \le \frac{2}{3}|S|$ for any subset  $S\subseteq V(G)$, Then $G$ contains $\{P_3, P_4, P_5\}$-factor.
\end{theorem}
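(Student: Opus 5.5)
We reduce the statement to the existence of a star factor and then convert each star into paths. The key tool is the Amahashi--Kano criterion \cite{12}: for an integer $k\ge 2$, a graph $G$ has a $\{K_{1,j}: 1\le j\le k\}$-factor if and only if $i(G-S)\le k|S|$ for every $S\subseteq V(G)$. With $k=2$, the hypothesis $i(G-S)\le \frac{2}{3}|S|\le 2|S|$ gives a $\{K_{1,1},K_{1,2}\}$-factor $F$, that is, a spanning subgraph whose components are edges ($P_2$) and $P_3$'s. The $P_3$ components are already acceptable. The real task is to absorb the $K_2$ components into neighbouring paths so that only $P_3$, $P_4$, $P_5$ remain.

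To make this possible, we choose $F$ extremal: among all $\{P_2,P_3\}$-factors, take one with the minimum number of $P_2$-components. Suppose a $P_2$-component $xy$ remains.

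\textbf{Extending a $P_2$.} If $x$ or $y$ has a neighbour $z$ that is an end of a $P_3$-component $zab$, we form $yxzab=P_5$. If instead $z$ is the centre of a $P_3$-component $azb$, we can regroup the five vertices as $yxz$ plus the edge $ab$, which does not improve the count. The better move is to take $z$ from a $P_3$ $azb$ and form $xyz$-type paths: delete $za$ to get $P_2$ $za$... (this must be checked carefully).

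\textbf{Counting argument.} The cleaner approach is via $S$. Let $S$ be the set of vertices whose removal isolates the ends of the bad $P_2$'s. Using the extremality of $F$, we would show that every neighbour of a $P_2$-component lies in a restricted set $S$, namely the centres of $P_3$'s or vertices of components already of maximal length. We would then show that $G-S$ has many isolated vertices, more than $\frac23|S|$. The factor $\frac23$ suggests the accounting: each vertex of $S$ is charged with $3/2$ isolated vertices, matching how a $P_5$ (or a $P_3$ with an attached $P_2$) pairs centre vertices with leaves.

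\textbf{Main obstacle.} I expect this to be the hardest step: designing the exchange moves so that the structure around an unabsorbable $P_2$ is rigid enough to produce a set $S$ violating $i(G-S)\le\frac23|S|$. If the exchange approach becomes too messy, I would instead use an alternating-path/Hall-type argument on the bipartite auxiliary graph between $S$ and the isolated vertices of $G-S$. In that approach each $s\in S$ receives at most $2$ leaves, and the leaf counts are weighted so that a centre of a $P_5$ gets $2$ and other $S$-vertices get $1$, averaged to $\frac32$.
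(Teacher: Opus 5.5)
\textbf{There is a genuine gap.} The paper does not prove this statement; it is quoted from Kano, Lu and Yu \cite{14}, so I am judging your proposal on its own.

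Your first step is fine: since $\frac23|S|\le 2|S|$, the Amahashi--Kano criterion gives a $\{K_{1,1},K_{1,2}\}$-factor. But the whole content of the theorem lies in removing the $P_2$-components, and that part is only announced. What you do write down also does not fit together:
\begin{itemize}
\item You minimize the number of $P_2$-components over $\{P_2,P_3\}$-factors, but your basic move $yxzab$ creates a $P_5$, which is outside that family. So minimality gives no contradiction. The extremal object has to be a path-factor with all components of order at least $2$, minimizing the number of $P_2$'s.
\item The regrouping ``$yxz$ plus the edge $ab$'' for a component $azb$ is not available, because $ab$ need not be an edge of $G$. Centres of $P_3$'s and inner vertices of longer paths are exactly where the obstruction sits.
\item Your set $S$ cannot ``isolate the ends'' of a stuck $P_2$ $xy$ unless one of $x,y$ is itself put into $S$. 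If $S$ consists of the outside neighbours of the stuck $P_2$'s, these survive as $K_2$-components of $G-S$, not as isolated vertices.
\end{itemize}
So the ``charge $\frac32$ per vertex of $S$'' heuristic has nothing to stand on until you say how $K_2$-components (and larger rigid configurations) become isolated vertices, and why there are enough of them. Getting such a bound from exchange arguments is essentially what the hard direction of Kaneko's characterization of $P_{\ge 3}$-factors provides. Your sketch gives no indication of how you would obtain it.

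\textbf{A route that works.} Invoke that characterization directly.
\begin{itemize}
\item A graph has a $\{P_3,P_4,P_5\}$-factor if and only if it has a $P_{\ge3}$-factor: split each $P_n$ with $n\ge 6$ as $P_3\cup P_{n-3}$ and repeat.
\item By Kaneko's theorem, $G$ has a $P_{\ge3}$-factor if and only if $\mathrm{sun}(G-S)\le 2|S|$ for all $S\subseteq V(G)$. Here a sun is $K_1$, $K_2$, or the corona of a factor-critical graph on at least three vertices.
\end{itemize}
Suppose $\mathrm{sun}(G-S)>2|S|$, with $a$ components $K_1$, $b$ components $K_2$, and $c$ big suns whose cores have orders $h_1,\dots,h_c\ge 3$. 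Let $S'$ consist of $S$, one end of each $K_2$, and all core vertices. Then $i(G-S')\ge a+b+\sum_j h_j$ and $|S'|=|S|+b+\sum_j h_j$, hence
\[3\,i(G-S')-2|S'|\ \ge\ 3a+b+\sum_j h_j-2|S|\ \ge\ a+b+c-2|S|\ >\ 0,\]
which contradicts the hypothesis. The $K_2$-components are exactly the tight case in this inequality (compare $K_s\vee(2s+1)K_2$). They, not isolated vertices, are what your exchange argument would have had to control.
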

\noindent Motivated by above mentioned work, we first establish a sufficient condition, in terms of size, that guarantees a connected graph $G$ with minimum degree $\delta$ to possess a
$\{P_3, P_4, P_5,\}$-factor. Furthermore, we present another sufficient
condition based on the adjacency spectral radius to ensure that a connected graph $G$ with minimum degree $\delta$ admits a $\{P_3, P_4, P_5,\}$-factor. Our main results are stated below.
\begin{theorem}
Let $G$ be a connected graph of order $n$ with $n \ge \frac{20\delta+53}{3}$, where $\delta$ is the minimum degree of $G$ and $\delta \not\equiv 0 \pmod{3}$. If
$E(G) > E(K_\delta \vee \left(K_{\,n-\left\lfloor \frac{5}{3}\delta \right\rfloor -1}
\cup \left(\left\lfloor \frac{2}{3}\delta \right\rfloor +1\right)K_1\right)$,
then $G$ has a $\Big\{ P_3, P_4, P_5 \Big\}$-factor.
\end{theorem}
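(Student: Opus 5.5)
We argue by contradiction, using the Kano–Lu–Yu criterion stated above. Suppose $G$ has no $\{P_3,P_4,P_5\}$-factor. Then some $S\subseteq V(G)$ satisfies $i(G-S)>\frac{2}{3}|S|$. Put $s=|S|$ and $i=i(G-S)$.

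First we pin down $s$ and $i$. Since $i$ is an integer, $i\ge \lfloor \frac{2}{3}s\rfloor+1$. If $s=0$, then $G$ connected with $i\ge 1$ forces $G=K_1$, which is excluded by the bound on $n$; so $s\ge 1$. Any isolated vertex of $G-S$ has all its neighbours in $S$, so $s\ge\delta$. The number of edges of $G$ is at most that of the graph obtained by making $S$ a clique, joining $S$ to everything, and making the non-isolated part of $G-S$ a clique. Taking $i=\lfloor\frac{2}{3}s\rfloor+1$ exactly is optimal, since moving extra isolated vertices into the clique part only adds edges. Hence
\[
e(G)\le e\Big(K_s\vee\big(K_{n-s-\lfloor 2s/3\rfloor-1}\cup(\lfloor\tfrac{2s}{3}\rfloor+1)K_1\big)\Big)=:f(s).
\]
For $s=\delta$ the right-hand side is exactly the extremal graph of the theorem, because $\delta+\lfloor\frac{2}{3}\delta\rfloor=\lfloor\frac{5}{3}\delta\rfloor$. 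The hypothesis $e(G)>f(\delta)$ therefore already gives a contradiction when $s=\delta$.

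The main step is to show $f(s)\le f(\delta)$ for every admissible $s\ge\delta$ with $n-s-\lfloor 2s/3\rfloor-1\ge0$. Write $t=\lfloor 2s/3\rfloor+1$. Then
\[
f(s)=\binom{n-t}{2}+ts,
\]
since the $n-t$ vertices of $S$ together with the clique part of $G-S$ form a clique, and each of the $t$ isolated vertices sees exactly the $s$ vertices of $S$. Compare $f(s)$ with $f(\delta)$ directly, with $t_0=\lfloor 2\delta/3\rfloor+1$. As $s$ grows by $3$, $t$ grows by $2$. Removing $2$ vertices from the big clique costs about $2n$ edges, while the term $ts$ gains only $O(s)$. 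This is a convex quadratic in $s$, so it suffices to check the two endpoints: $s=\delta$, where equality holds, and the largest admissible $s$, roughly $s\approx\frac{3(n-1)}{5}$. At the upper end the big clique has essentially vanished and $f(s)\approx ts\approx\frac{2}{3}s^2\approx\frac{6}{25}n^2$, while $f(\delta)\approx n^2/2$. That comparison holds comfortably for large $n$, and the explicit threshold $n\ge\frac{20\delta+53}{3}$ should be exactly what makes it work. To handle the floors, I would split $s$ into residues mod $3$, or bound $t\le \frac{2s}{3}+1$ from above and $t_0\ge\frac{2\delta-2}{3}+1$ from below. The hypothesis $\delta\not\equiv0\pmod 3$ enters through the value of $\lfloor 2\delta/3\rfloor$ relative to $\frac{2\delta}{3}$: for $\delta\equiv1$ or $2\pmod 3$ it equals $\frac{2\delta-2}{3}$ or $\frac{2\delta-1}{3}$. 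This gives slack in the comparison at the endpoint $s=\delta+1$ or $\delta+2$, where the floor jumps.

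I expect the main obstacle to be this case analysis for small $s$ just above $\delta$. Here the convexity/endpoint argument must be replaced by an exact computation of $f(\delta+j)-f(\delta)$ for $j=1,2,3$ in each residue class of $\delta$. The difference has the form $-(t-t_0)(n-\ldots)+(ts-t_0\delta)$. When $t=t_0$ it equals $t_0(s-\delta)>0$, which would be bad. So I would first observe that increasing $s$ without increasing $t$ is never optimal for the adversary: the condition $i>\frac{2}{3}s$ with $t=t_0$ is then implied by the $s=\delta$ structure, and monotonicity in the clique size lets us reduce $s$. The parity condition on $\delta$ is what lets these boundary cases close. Once $f(s)\le f(\delta)<e(G)$ for all $s$, we reach a contradiction, so $G$ has a $\{P_3,P_4,P_5\}$-factor.
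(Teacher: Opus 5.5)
Your outline is the paper's proof. Theorem 1.1 gives $S$, and $G$ is a spanning subgraph of $K_s\vee(K_{n-s-t}\cup tK_1)$ with $t=\lfloor 2s/3\rfloor+1$. The case $s=\delta$ yields the extremal graph. For $s\ge\delta+1$ one splits by $s \bmod 3$ and uses that $f(s)=\binom{n-t}{2}+ts$ is a convex quadratic on each class; the paper works with the concave $h(s)=18(f(\delta)-f(s))$. The gap is that the endpoint comparison, which is where the full strength of $n\ge\frac{20\delta+53}{3}$ is needed, is never carried out, and your estimate for it is wrong.

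At the top of the range, $S$ itself is a clique on about $\frac{3n}{5}$ vertices. So $f(s)\approx\frac{9}{50}n^2+\frac{6}{25}n^2=\frac{21}{50}n^2$, not $\frac{6}{25}n^2$. Against $f(\delta)\approx\frac{n^2}{2}-\frac23\delta n+\frac89\delta^2$, the margin is about $\frac{2}{225}(3n-20\delta)(3n-5\delta)$, which vanishes at $n\approx\frac{20}{3}\delta$. The comparison is therefore tight, not comfortable. It must be done exactly in each residue class, with upper limits $\frac{3n-3}{5},\frac{3n-1}{5},\frac{3n-2}{5}$; this is the paper's identity $h(\delta+1)-h(\frac{3n-3}{5})=-\frac{4}{25}(3n-20\delta-53)(3n-5\delta-8)$ and its analogues.

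The left endpoint also cannot be $s=\delta$ for every class. For $\delta\equiv2\pmod 3$, the $s\equiv1$ quadratic at $s=\delta$ corresponds to $t=t_0-\frac13$ and exceeds $f(\delta)$ by $\frac{n-t_0-\delta}{3}-\frac19$. You must start at $\delta+1$, as the paper does (getting linear quantities such as $24n-52\delta-66\ge 0$), or at the least element of each class.

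Your alternative of using $t\le\frac{2s}{3}+1$ and $t_0\ge\frac{2\delta-2}{3}+1$ points the wrong way. Raising $t$ by one changes $\binom{n-t}{2}+ts$ by $1-(n-s-t)$, which is $\le 0$ whenever the clique part is nonempty. So you need lower bounds on $t$ and upper bounds on $t_0$.

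Your treatment of the case $t=t_0$ with $s>\delta$ is not a valid argument. There $f(s)=f(\delta)+t_0(s-\delta)>f(\delta)$, so the edge count gives no contradiction. Nor can you pass to a set of size $\delta$, since moving vertices out of $S$ can destroy isolated vertices of $G-S$. The case is simply vacuous, and that is the actual role of $\delta\not\equiv 0\pmod 3$. In that case $\lfloor\frac{2(\delta+1)}{3}\rfloor=\lfloor\frac{2\delta}{3}\rfloor+1$, so $t\ge t_0+1$ for every $s\ge\delta+1$. For $\delta\equiv 0$, by contrast, $t=t_0$ at $s=\delta+1$ and the counting argument genuinely breaks. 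With this observation and the explicit residue-class computations, your plan becomes the paper's proof.
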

\begin{theorem}
Let $G$ be a connected graph of order $n$ with $n \ge \max\left(\frac{6\delta^2+22\delta+31}{6}, \frac{20\delta+56}{3}\right)$, where $\delta$ is the minimum degree of $G$ and $\delta \not\equiv 0 \pmod{3}$. If $\rho(G) > \rho(K_\delta \vee \left(K_{\,n-\left\lfloor \frac{5}{3}\delta \right\rfloor -1}
\cup \left(\left\lfloor \frac{2}{3}\delta \right\rfloor +1\right)K_1\right)$,
then $G$ has a $\Big\{ P_3, P_4, P_5 \Big\}$-factor.
\end{theorem}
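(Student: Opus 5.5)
We argue by contradiction: assume $G$ has no $\{P_3,P_4,P_5\}$-factor. By Theorem 1.1 (Kano et al.) there is a subset $S\subseteq V(G)$ with $i(G-S)>\frac{2}{3}|S|$. Put $s=|S|$ and $i=i(G-S)$. Then $i\ge \lfloor \frac{2}{3}s\rfloor+1$. Each isolated vertex of $G-S$ has all of its neighbours in $S$, so $s\ge \delta$. Also $s\ge 1$, because $G$ is connected and $n$ is large.

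To get an extremal upper bound on $\rho(G)$, we saturate edges. $G$ is a spanning subgraph of
\[
H_s=K_s\vee\left(K_{n-s-i}\cup iK_1\right), \qquad i=\left\lfloor \tfrac{2}{3}s\right\rfloor+1 .
\]
This holds because we may delete isolated vertices from the independent part, moving them into the clique part, and then add all missing edges. Adding edges strictly increases $\rho$, and lowering $i$ to its minimum while keeping $s$ fixed also raises $\rho$. Hence $\rho(G)\le \rho(H_s)$ with $s\ge\delta$. It therefore suffices to prove $\rho(H_s)\le \rho(H_\delta)$ for all $\delta\le s\le \frac{3(n-1)}{5}$ or so; the upper limit comes from $n\ge s+i$. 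Note that $H_\delta$ is exactly the graph in the statement, since $\delta+\lfloor\frac{2}{3}\delta\rfloor+1=\lfloor\frac{5}{3}\delta\rfloor+1$ when $\delta\not\equiv0\pmod 3$. This is why that congruence condition appears: it makes the two floor expressions consistent.

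For $s=\delta$ we would get $\rho(G)\le\rho(H_\delta)$, contradicting the hypothesis. Equality is excluded because $G\neq H_\delta$ would give strict inequality, and $G=H_\delta$ contradicts $\rho(G)>\rho(H_\delta)$. So the main work is the case $s\ge\delta+1$. Here we use the equitable quotient partition $\{S, V(K_{n-s-i}), iK_1\}$. Then $\rho(H_s)$ is the largest root of the cubic
\[
f_s(x)=x^3-(n-i-2)x^2-\bigl(si+n-s-i-1\bigr)x+ si(n-s-i-1)-s i(\ldots)
\]
which we compute explicitly from the $3\times 3$ quotient matrix $\begin{pmatrix} s-1 & n-s-i & i\\ s & n-s-i-1 & 0\\ s&0&0\end{pmatrix}$.

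The expected obstacle is comparing $\rho(H_s)$ with $\rho(H_\delta)$ uniformly in $s$. The floors make $i$ depend on $s\bmod 3$, so $f_s$ changes shape in three residue classes. I would handle this in two regimes.

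\textbf{Large $s$.} Here $n-s-i$ is comparatively small, and crude bounds suffice. On one side, $\rho(H_s)\le$ the Hong-type bound $\sqrt{2m-n+1}$, or more simply $\rho(H_s)<n-\frac{2}{3}s+c$. On the other side, $\rho(H_\delta)>\rho(K_{n-\lfloor 2\delta/3\rfloor-1})=n-\lfloor\frac{2}{3}\delta\rfloor-2$. Together these settle the case once $s$ exceeds a threshold of order $\delta$ plus a constant.

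\textbf{Small $s$.} For $\delta+1\le s$ below that threshold, we evaluate the difference $f_s(x)-f_\delta(x)$ at $x=\rho(H_\delta)$. Using $\rho(H_\delta)\ge n-\lfloor\frac{5}{3}\delta\rfloor-2$, we show that $f_s(\rho(H_\delta))>0$ and that $f_s$ is increasing beyond that point. This forces $\rho(H_s)<\rho(H_\delta)$.

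The difference is a polynomial in $n$ whose leading coefficient is positive, roughly $(s-\delta)\cdot\frac{1}{3}\cdot n^2$ up to lower order terms. The lower-order terms involve $s^2$ and $\delta^2$, and controlling them is exactly where the hypothesis $n\ge\frac{6\delta^2+22\delta+31}{6}$ enters. This sign verification, done carefully across the residue classes of $s$ and $\delta$ modulo $3$, is the step I expect to require the most calculation. It is also the step most prone to failure if the bound on $n$ is tight.
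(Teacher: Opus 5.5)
\textbf{There is a genuine gap: the case $s\ge\delta+1$ is not proved.} Your reduction to $H_s=K_s\vee(K_{n-s-i_s}\cup i_sK_1)$ with $i_s=\lfloor 2s/3\rfloor+1$ is fine, and so is the case $s=\delta$. But the case $s\ge\delta+1$ is the actual content of the theorem, and you only outline it.

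\begin{itemize}
\item Your large-$s$ regime is asserted with an unspecified threshold.
\item Your small-$s$ regime is an unexecuted sign check on a cubic that is misstated. Your quotient matrix gives $x^3-(n-i-2)x^2-(n-i-1+si)x+si(n-s-i-1)$. There is no $-s$ in the linear coefficient and no extra constant term.
\item The heuristic behind that regime is wrong. The dominant term of $f_s(\rho(H_\delta))=(f_s-f_\delta)(\rho(H_\delta))$ is $(i_s-i_\delta)n^2$, not a multiple of $(s-\delta)n^2$. So everything hinges on $i_s>i_\delta$.
\end{itemize}

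\textbf{Where the congruence is really used.} The condition $\delta\not\equiv 0\pmod 3$ gives $\lfloor 2(\delta+1)/3\rfloor=\lfloor 2\delta/3\rfloor+1$, hence $i_s\ge i_\delta+1$ for every $s\ge\delta+1$. Your stated reason for the congruence is incorrect: $\delta+\lfloor 2\delta/3\rfloor=\lfloor 5\delta/3\rfloor$ holds for every integer $\delta$. For $\delta\equiv 0\pmod 3$ one has $i_{\delta+1}=i_\delta$, so $H_{\delta+1}$ is $H_\delta$ with extra edges. Then the inequality $\rho(H_{\delta+1})<\rho(H_\delta)$ that your small-$s$ regime aims for is false. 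A plan that does not locate this cannot be completed as written.

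\textbf{The missing observation.} Your large-$s$ tool already handles every $s\ge\delta+1$, so no cubic is needed; this is exactly the paper's argument for this case.
\begin{itemize}
\item One has $2e(H_s)-n+1=(n-i_s-1)^2+i_s(2s-1)$.
\item One also has $\rho(G)>\rho(H_\delta)>\rho(K_{n-i_\delta})=n-i_\delta-1$.
\item In each residue class of $s$ modulo $3$, $i_s$ is affine in $s$. So $(n-i_s-1)^2+i_s(2s-1)$ is a convex quadratic in $s$ on $\delta+1\le s\le (3n-1)/5$, and it is maximized at an endpoint.
\item The hypothesis $n\ge(20\delta+56)/3$ makes the lower endpoint dominate the upper one.
\item At the lower endpoint, using $i_s\ge i_\delta+1$, the required inequality $(n-i_s-1)^2+i_s(2s-1)<(n-i_\delta-1)^2$ becomes $2n>(i_\delta+1)(2\delta+1)+2i_\delta+3$ for $s=\delta+1$. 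This is roughly $n>\frac23\delta^2$. The condition $n\ge(6\delta^2+22\delta+31)/6$ is used here, not in a cubic comparison.
\end{itemize}
Carrying this out in the three residue classes gives $\rho(H_s)\le\sqrt{2e(H_s)-n+1}<n-\lfloor 2\delta/3\rfloor-2<\rho(G)\le\rho(H_s)$, a contradiction.
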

\section{The proof of Theorem 1.2}
\textit{Proof of Theorem 1.2.}
Suppose, to the contrary, that $G$ contains no $\{P_3,P_4,P_5\}$-factor.
Then, by Theorem 1.1, there exists a nonempty subset $S \subseteq V(G)$ such that $i(G-S) > \frac{2}{3}|S|$. Let $|S| = s$ and $i(G-S) = i$.
Since $i(G-S)$ is an integer, we have $i \ge \left\lfloor \frac{2}{3}s \right\rfloor + 1.$ It follows that $G$ is a spanning subgraph of $ G_1 = K_s \vee \left( K_{\,n-\left\lfloor \frac{5}{3}s \right\rfloor -1}
      \cup \left(\left\lfloor \frac{2}{3}s \right\rfloor +1\right)K_1 \right)$. So,\begin{equation}
E(G) \leq E(G_1).
\label{eq1}
\end{equation}
In addition, from $\delta(G)=\delta$ we have $\delta(G_1)=s \geq \delta$.\\

\noindent\textbf{Case 1}. $s=\delta$. Then $G_1= K_\delta \vee \left(K_{\,n-\left\lfloor \frac{5}{3}\delta \right\rfloor -1}
\cup \left(\left\lfloor \frac{2}{3}\delta \right\rfloor +1\right)K_1\right)$. From (2.1), we have \\
$E(G) \leq E(K_\delta \vee \left(K_{\,n-\left\lfloor \frac{5}{3}\delta \right\rfloor -1}
\cup \left(\left\lfloor \frac{2}{3}\delta \right\rfloor +1\right)K_1\right)$, which contradicts the conditions of the theorem.\\

\noindent\textbf{Case 2}. $s \ge \delta+1$. For $H=  K_x \vee \left( K_{\,n-\left\lfloor \frac{5}{3}x \right\rfloor -1}
      \cup \left(\left\lfloor \frac{2}{3}x \right\rfloor +1\right)K_1 \right)$ by simple calculations, we have\\
      $E(H)=\frac{1}{2}\Big\{ (n-\left\lfloor \frac{5}{3}x \right\rfloor)(n-\left\lfloor \frac{5}{3}x \right\rfloor-3)-x(x-2n+1)+2 \Big\}$. Now, we consider the following subcases.\\

\noindent\textbf{Subcase 2.1}. $s \equiv 0 \pmod{3}$ and $\delta \equiv 1 \pmod{3}$. Then $\left\lfloor \frac{5s}{3} \right\rfloor = \frac{5s}{3}$ and $\left\lfloor \frac{5\delta}{3} \right\rfloor = \frac{5\delta-2}{3}$. Also $n \ge \left\lfloor \frac{5s}{3} \right\rfloor +1$, which implies that $s \le \frac{3n-3}{5}$. Now, we have
\begin{align*}
& E\left(
K_\delta \vee
\left(
K_{\,n-\left\lfloor \frac{5\delta}{3} \right\rfloor -1}
\cup
\left(\left\lfloor \frac{2\delta}{3} \right\rfloor +1\right)K_1
\right)
\right) - E(G_1)  \\
&= \frac{1}{2}\Big\{ \left(n-\left\lfloor \frac{5}{3}\delta \right\rfloor\right)\left(n-\left\lfloor \frac{5}{3}\delta\right\rfloor-3\right)-\delta(\delta-2n+1)
+2 \Big\}\\
&\quad
-\frac{1}{2}\Big\{ \left(n-\left\lfloor \frac{5}{3}s \right\rfloor\right)\left(n-\left\lfloor \frac{5}{3}s \right\rfloor-3\right)-s(s-2n+1)+2 \Big\}\\
&= \frac{1}{2}\Big\{
\left(n-\frac{5\delta-2}{3}\right)
\left(n-\frac{5\delta-2}{3}-3\right)
-\left(n-\frac{5s}{3}\right)
\left(n-\frac{5s}{3}-3\right)
\Big\}  \\
&\quad
-\frac{1}{2}\Big\{
\delta(\delta-2n+1)-s(s-2n+1)
\Big\}  \\
&= \frac{1}{18}\Big\{
-16s^2+(12n-36)s+12n-12n\delta
+16\delta^2+16\delta-14
\Big\}.
\end{align*}

\noindent Let $f(s)=-16s^2+(12n-36)s+12n-12n\delta+16\delta^2+16\delta-14$. Since $\delta+1 \le s \le \dfrac{3n-3}{5}$, we have $f(\delta+1)-f\!\left(\frac{3n-3}{5}\right)
=
-\frac{4}{25}(3n-20\delta-53)(3n-5\delta-8)$. For $n \ge \dfrac{20\delta+53}{3}$, it follows that $f(s)$ attains its minimum
at $s=\delta+1$. Hence, we get
\begin{align*}
& E\left(
K_\delta \vee
\left(
K_{\,n-\left\lfloor \frac{5\delta}{3} \right\rfloor -1}
\cup
\left(\left\lfloor \frac{2\delta}{3} \right\rfloor +1\right)K_1
\right)
\right)
- E(G_1)  \\
&\ge \frac{1}{18}f(\delta+1)
= \frac{1}{18}(24n-52\delta-66)
\ge 0 .
\end{align*}

\noindent From (2.1), this implies that
\[
E(G) \le E(G_1) \le
E\left(
K_\delta \vee
\left(
K_{\,n-\left\lfloor \frac{5\delta}{3} \right\rfloor -1}
\cup
\left(\left\lfloor \frac{2\delta}{3} \right\rfloor +1\right)K_1
\right)
\right)
\]
which contradicts the conditions of the theorem.\\

\noindent\textbf{Subcase 2.2}. $s \equiv 0 \pmod{3}$ and $\delta \equiv 2 \pmod{3}$. Then$ \left\lfloor \frac{5s}{3} \right\rfloor = \frac{5s}{3}$ and $\left\lfloor \frac{5\delta}{3} \right\rfloor = \frac{5\delta-1}{3}$. Moreover, since $n \ge \left\lfloor \frac{5s}{3} \right\rfloor +1$, we obtain $s \le \frac{3n-3}{5}$. Now, we have
\begin{align*}
& E\!\left(
K_\delta \vee
\left(
K_{\,n-\left\lfloor \frac{5\delta}{3} \right\rfloor-1}
\cup
\left(\left\lfloor \frac{2\delta}{3} \right\rfloor+1\right)K_1
\right)
\right) - E(G_1) \\
&= \frac{1}{2}\Big\{
\left(n-\left\lfloor \tfrac{5\delta}{3} \right\rfloor\right)
\left(n-\left\lfloor \tfrac{5\delta}{3} \right\rfloor-3\right)
-\delta(\delta-2n+1)+2
\Big\} \\
&\quad
-\frac{1}{2}\Big\{
\left(n-\left\lfloor \tfrac{5s}{3} \right\rfloor\right)
\left(n-\left\lfloor \tfrac{5s}{3} \right\rfloor-3\right)
-s(s-2n+1)+2
\Big\} \\
&= \frac{1}{2}\Big\{
\left(n-\tfrac{5\delta-1}{3}\right)
\left(n-\tfrac{5\delta-1}{3}-3\right)
-\delta(\delta-2n+1)+2
\Big\} \\
&\quad
-\frac{1}{2}\Big\{
\left(n-\tfrac{5s}{3}\right)
\left(n-\tfrac{5s}{3}-3\right)
-s(s-2n+1)+2
\Big\} \\
&= \frac{1}{18}
\Big\{
-16s^2+(12n-36)s+6n-12n\delta
+16\delta^2+26\delta-8
\Big\}.
\end{align*}

\noindent Let $f(s)=-16s^2+(12n-36)s+6n-12n\delta+16\delta^2+26\delta-8$. Since $\delta+1 \le s \le \dfrac{3n-3}{5}$, we obtain $f(\delta+1)-f\!\left(\frac{3n-3}{5}\right)
=
-\frac{4}{25}(3n-20\delta-53)(3n-5\delta-8)$. For $n \ge \dfrac{20\delta+53}{3}$, it follows that $f(s)$ achieves its minimum value at $s=\delta+1$. Consequently,
\begin{align*}
& E\!\left(
K_\delta \vee
\left(
K_{\,n-\left\lfloor \frac{5\delta}{3} \right\rfloor-1}
\cup
\left(\left\lfloor \frac{2\delta}{3} \right\rfloor+1\right)K_1
\right)
\right) - E(G_1) \\
&\ge \frac{1}{18}f(\delta+1)
= \frac{1}{18}(18n-42\delta-60)
\ge 0 .
\end{align*}

\noindent Therefore, using (2.1), we obtain
\[
E(G) \le E(G_1) \le
E\!\left(
K_\delta \vee
\left(
K_{\,n-\left\lfloor \frac{5\delta}{3} \right\rfloor-1}
\cup
\left(\left\lfloor \frac{2\delta}{3} \right\rfloor+1\right)K_1
\right)
\right)
\]
which contradicts the hypothesis of the theorem.\\

\noindent \textbf{Subcase 2.3}. $s \equiv 1 \pmod{3}$ and $\delta \equiv 1 \pmod{3}$. Then $\left\lfloor \frac{5s}{3} \right\rfloor = \frac{5s-2}{3}$ and $\left\lfloor \frac{5\delta}{3} \right\rfloor = \frac{5\delta-2}{3}$. Since $n \ge \left\lfloor \frac{5s}{3} \right\rfloor +1$, it follows that $s \le \frac{3n-1}{5}$. Now, we have
\begin{align*}
& E\left(
K_\delta \vee
\left(
K_{\,n-\left\lfloor \frac{5\delta}{3} \right\rfloor-1}
\cup
\left(\left\lfloor \frac{2\delta}{3} \right\rfloor+1\right)K_1
\right)
\right) - E(G_1) \\
&= \frac{1}{2}\Big\{
\left(n-\left\lfloor \tfrac{5\delta}{3} \right\rfloor\right)
\left(n-\left\lfloor \tfrac{5\delta}{3} \right\rfloor-3\right)
-\delta(\delta-2n+1)+2
\Big\}  \\
&\quad
-\frac{1}{2}\Big\{
\left(n-\left\lfloor \tfrac{5s}{3} \right\rfloor\right)
\left(n-\left\lfloor \tfrac{5s}{3} \right\rfloor-3\right)
-s(s-2n+1)+2
\Big\} \\
&= \frac{1}{2}\Big\{
\left(n-\tfrac{5\delta-2}{3}\right)
\left(n-\tfrac{5\delta-2}{3}-3\right)
-\delta(\delta-2n+1)+2
\Big\}  \\
&\quad
-\frac{1}{2}\Big\{
\left(n-\tfrac{5s-2}{3}\right)
\left(n-\tfrac{5s-2}{3}-3\right)
-s(s-2n+1)+2
\Big\} \\
&= \frac{1}{18}
\Big\{
-16s^2-(12n-16)s+12n\delta+16\delta^2+16\delta
\Big\}.
\end{align*}

\noindent Define $f(s)=-16s^2+(12n-16)s-12n\delta+16\delta^2+16\delta$. Since $\delta+1 \le s \le \dfrac{3n-1}{5}$, we obtain $ f(\delta+1)-f\!\left(\frac{3n-1}{5}\right)
=
-\frac{4}{25}(3n-2\delta-36)(3n-5\delta-6)$. For $n \ge \dfrac{20\delta+53}{3}$, it follows that $f(s)$ reaches its minimum at $s=\delta+1$. Consequently,
\begin{align*}
& E\!\left(
K_\delta \vee
\left(
K_{\,n-\left\lfloor \frac{5\delta}{3} \right\rfloor-1}
\cup
\left(\left\lfloor \frac{2\delta}{3} \right\rfloor+1\right)K_1
\right)
\right) - E(G_1) \\
&\ge \frac{1}{18}f(\delta+1)
= \frac{1}{18}(12n-32\delta-32)
\ge 0 .
\end{align*}

\noindent Hence, from (2.1), we get
\[
E(G) \le E(G_1) \le
E\!\left(
K_\delta \vee
\left(
K_{\,n-\left\lfloor \frac{5\delta}{3} \right\rfloor-1}
\cup
\left(\left\lfloor \frac{2\delta}{3} \right\rfloor+1\right)K_1
\right)
\right)
\]
which contradicts the assumption of the theorem.\\

\noindent\textbf{Subcase 2.4}. $s \equiv 1 \pmod{3}$ and $\delta \equiv 2 \pmod{3}$. Then $\left\lfloor \frac{5s}{3} \right\rfloor = \frac{5s-2}{3}$ and $\left\lfloor \frac{5\delta}{3} \right\rfloor = \frac{5\delta-1}{3}$. Moreover, since $n \ge \left\lfloor \frac{5s}{3} \right\rfloor +1$, we obtain $s \le \frac{3n-1}{5}$. Now, we have
\begin{align*}
& E\left(
K_\delta \vee
\left(
K_{\,n-\left\lfloor \frac{5\delta}{3} \right\rfloor-1}
\cup
\left(\left\lfloor \frac{2\delta}{3} \right\rfloor+1\right)K_1
\right)
\right) - E(G_1) \\
&= \frac{1}{2}\Big\{
\left(n-\left\lfloor \tfrac{5\delta}{3} \right\rfloor\right)
\left(n-\left\lfloor \tfrac{5\delta}{3} \right\rfloor-3\right)
-\delta(\delta-2n+1)+2
\Big\}  \\
&\quad
-\frac{1}{2}\Big\{
\left(n-\left\lfloor \tfrac{5s}{3} \right\rfloor\right)
\left(n-\left\lfloor \tfrac{5s}{3} \right\rfloor-3\right)
-s(s-2n+1)+2
\Big\} \\
&= \frac{1}{2}\Big\{
\left(n-\tfrac{5\delta-1}{3}\right)
\left(n-\tfrac{5\delta-1}{3}-3\right)
-\delta(\delta-2n+1)+2
\Big\} \\
&\quad
-\frac{1}{2}\Big\{
\left(n-\tfrac{5s-2}{3}\right)
\left(n-\tfrac{5s-2}{3}-3\right)
-s(s-2n+1)+2
\Big\} \\
&= \frac{1}{18}
\Big\{
-16s^2+(12n-16)s-6n-12n\delta
+16\delta^2+26\delta+6
\Big\}.
\end{align*}

\noindent Let $f(s)=-16s^2+(12n-16)s-6n-12n\delta+16\delta^2+26\delta+6$. Since $\delta+1 \le s \le \dfrac{3n-1}{5}$, we obtain $f(\delta+1)-f\!\left(\frac{3n-1}{5}\right)
=
-\frac{4}{25}(3n-20\delta-36)(3n-5\delta-6)$. For $n \ge \dfrac{20\delta+53}{3}$, it follows that $f(s)$ attains its
minimum at $s=\delta+1$. Therefore, we obtain
\begin{align*}
& E\left(
K_\delta \vee
\left(
K_{\,n-\left\lfloor \frac{5\delta}{3} \right\rfloor-1}
\cup
\left(\left\lfloor \frac{2\delta}{3} \right\rfloor+1\right)K_1
\right)
\right) - E(G_1) \\
&\ge \frac{1}{18}f(\delta+1)
= \frac{1}{18}(6n-22\delta-26)
\ge 0.
\end{align*}

\noindent Consequently, using (2.1), we have
\[
E(G) \le E(G_1) \le
E\left(
K_\delta \vee
\left(
K_{\,n-\left\lfloor \frac{5\delta}{3} \right\rfloor-1}
\cup
\left(\left\lfloor \frac{2\delta}{3} \right\rfloor+1\right)K_1
\right)
\right)
\]
which contradicts the assumption of the theorem.\\

\noindent\textbf{Subcase 2.5}. $s \equiv 2 \pmod{3}$ and $\delta \equiv 1 \pmod{3}$. Then
$\left\lfloor \frac{5s}{3} \right\rfloor = \frac{5s-1}{3}$ and $\left\lfloor \frac{5\delta}{3} \right\rfloor = \frac{5\delta-2}{3}$. Furthermore, since $n \ge \left\lfloor \frac{5s}{3} \right\rfloor +1$, it follows that
$s \le \frac{3n-2}{5}$. Now, we have
\begin{align*}
& E\left(
K_\delta \vee
\left(
K_{\,n-\left\lfloor \frac{5\delta}{3} \right\rfloor-1}
\cup
\left(\left\lfloor \frac{2\delta}{3} \right\rfloor+1\right)K_1
\right)
\right) - E(G_1) \\
&= \frac{1}{2}\Big\{
\left(n-\left\lfloor \tfrac{5\delta}{3} \right\rfloor\right)
\left(n-\left\lfloor \tfrac{5\delta}{3} \right\rfloor-3\right)
-\delta(\delta-2n+1)+2
\Big\}  \\
&\quad
-\frac{1}{2}\Big\{
\left(n-\left\lfloor \tfrac{5s}{3} \right\rfloor\right)
\left(n-\left\lfloor \tfrac{5s}{3} \right\rfloor-3\right)
-s(s-2n+1)+2
\Big\} \\
&= \frac{1}{2}\Big\{
\left(n-\tfrac{5\delta-2}{3}\right)
\left(n-\tfrac{5\delta-2}{3}-3\right)
-\delta(\delta-2n+1)+2
\Big\}  \\
&\quad
-\frac{1}{2}\Big\{
\left(n-\tfrac{5s-1}{3}\right)
\left(n-\tfrac{5s-1}{3}-3\right)
-s(s-2n+1)+2
\Big\} \\
&= \frac{1}{18}
\Big\{
-16s^2+(12n-26)s+6n-12n\delta
+16\delta^2+16\delta-6
\Big\}.
\end{align*}

\noindent Let $f(s)=-16s^2+(12n-26)s+6n-12n\delta+16\delta^2+16\delta-6 $. Since $\delta+1 \le s \le \dfrac{3n-2}{5}$, we obtain $f(\delta+1)-f\!\left(\frac{3n-2}{5}\right)
=
-\frac{2}{25}(6n-40\delta-89)(3n-5\delta-7)$. For $n \ge \dfrac{20\delta+53}{3}$, it follows that $f(s)$ attains its
minimum at $s=\delta+1$. Hence, we have
\begin{align*}
& E\!\left(
K_\delta \vee
\left(
K_{\,n-\left\lfloor \frac{5\delta}{3} \right\rfloor-1}
\cup
\left(\left\lfloor \frac{2\delta}{3} \right\rfloor+1\right)K_1
\right)
\right) - E(G_1) \\
&\ge \frac{1}{18}f(\delta+1)
= \frac{1}{18}(18n-42\delta-48)
\ge 0 .
\end{align*}

\noindent Consequently, using (2.1)
\[
E(G) \le E(G_1) \le
E\!\left(
K_\delta \vee
\left(
K_{\,n-\left\lfloor \frac{5\delta}{3} \right\rfloor-1}
\cup
\left(\left\lfloor \frac{2\delta}{3} \right\rfloor+1\right)K_1
\right)
\right)
\]
which contradicts the assumption of the theorem.\\

\noindent \textbf{Subcase 2.6}. $s \equiv 2 \pmod{3}$ and $\delta \equiv 2 \pmod{3}$. Then
$\left\lfloor \frac{5s}{3} \right\rfloor = \frac{5s-1}{3}$ and $\left\lfloor \frac{5\delta}{3} \right\rfloor = \frac{5\delta-1}{3}$. Furthermore, since $n \ge \left\lfloor \frac{5s}{3} \right\rfloor +1$, it follows that
$s \le \frac{3n-2}{5}$. Now, we have
\begin{align*}
&E\left(
K_\delta \vee
\left(
K_{\,n-\left\lfloor \frac{5\delta}{3} \right\rfloor-1}
\cup
\left(\left\lfloor \frac{2\delta}{3} \right\rfloor+1\right)K_1
\right)
\right) - E(G_1) \\
&= \frac{1}{2}\Big\{
\left(n-\left\lfloor \tfrac{5\delta}{3} \right\rfloor\right)
\left(n-\left\lfloor \tfrac{5\delta}{3} \right\rfloor-3\right)
-\delta(\delta-2n+1)+2
\Big\}  \\
&\quad
-\frac{1}{2}\Big\{
\left(n-\left\lfloor \tfrac{5s}{3} \right\rfloor\right)
\left(n-\left\lfloor \tfrac{5s}{3} \right\rfloor-3\right)
-s(s-2n+1)+2
\Big\} \\
&= \frac{1}{2}\Big\{
\left(n-\tfrac{5\delta-1}{3}\right)
\left(n-\tfrac{5\delta-1}{3}-3\right)
-\delta(\delta-2n+1)+2
\Big\}  \\
&\quad
-\frac{1}{2}\Big\{
\left(n-\tfrac{5s-1}{3}\right)
\left(n-\tfrac{5s-1}{3}-3\right)
-s(s-2n+1)+2
\Big\} \\
&= \frac{1}{18}
\Big\{
-16s^2+(12n-26)s-12n\delta
+16\delta^2+26\delta
\Big\}.
\end{align*}

\noindent Let $f(s)=-16s^2+(12n-26)s-12n\delta+16\delta^2+26\delta$. Since $\delta+1 \le s \le \dfrac{3n-2}{5}$, we obtain $f(\delta+1)-f\!\left(\frac{3n-2}{5}\right)
=
-\frac{2}{25}(6n-40\delta-89)(3n-5\delta-7)$.
For $n \ge \dfrac{20\delta+53}{3}$, it follows that $f(s)$ attains its
minimum at $s=\delta+1$. Hence, we have
\begin{align*}
& E\left(
K_\delta \vee
\left(
K_{\,n-\left\lfloor \frac{5\delta}{3} \right\rfloor-1}
\cup
\left(\left\lfloor \frac{2\delta}{3} \right\rfloor+1\right)K_1
\right)
\right) - E(G_1) \\
&\ge \frac{1}{18}f(\delta+1)
= \frac{1}{18}(12n-32\delta-42)
\ge 0.
\end{align*}

\noindent Thus, using (2.1)
\[
E(G) \le E(G_1) \le
E\left(
K_\delta \vee
\left(
K_{\,n-\left\lfloor \frac{5\delta}{3} \right\rfloor-1}
\cup
\left(\left\lfloor \frac{2\delta}{3} \right\rfloor+1\right)K_1
\right)
\right)
\]
which contradicts the assumption of the theorem.\qed
\noindent \textbf{Remark 2.1} The requirement $E(G) > E\left(K_\delta \vee \left( K_{\,n-\left\lfloor \frac{5}{3}\delta \right\rfloor -1}
\cup \left(\left\lfloor \frac{2}{3}\delta \right\rfloor +1\right)K_1 \right)\right)$ in Theorem 1.2 cannot be substituted by $E(G) \geq E\left(K_\delta \vee \left( K_{\,n-\left\lfloor \frac{5}{3}\delta \right\rfloor -1}
\cup \left(\left\lfloor \frac{2}{3}\delta \right\rfloor +1\right)K_1 \right)\right)$. Indeed, let us select $S = V(K_\delta)$ in the graph $K_\delta \vee \left( K_{\,n-\left\lfloor \frac{5}{3}\delta \right\rfloor -1}
\cup \left(\left\lfloor \frac{2}{3}\delta \right\rfloor +1\right)K_1 \right)$. Then $|S| = \delta$, and it follows that,
$i(G - S) = \left\lfloor \frac{2}{3}\delta \right\rfloor + 1 \geq \frac{2\delta}{3} + \frac{1}{3} = \frac{2|S|}{3} + \frac{1}{3} > \frac{2}{3}|S|$. Consequently, by Theorem 1.1, the graph does not admit any $\{P_3, P_4, P_5\}$-factor. \qed

\section{Proof of Theorem 1.3}
\noindent We first provide some lemmas, which will be used in proof of this theorem.
\begin{lemma} {\em \cite{30}}
Let $G$ be a graph with $n$ vertices. Then $\rho(G) \le \sqrt{2e(G)-n+1}$.

\end{lemma}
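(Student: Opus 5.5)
The bound is Hong's inequality, and the plan is to prove it from the Rayleigh quotient together with the spectral identity $\sum_i \lambda_i^2 = \operatorname{tr}(A^2) = 2e(G)$. We need some care about connectivity. The standard statement requires $G$ to have no isolated vertices, and it applies in the proof of Theorem 1.3, where $G$ is connected and $n\ge 2$. In fact, for $G=\overline{K_n}$ with $n\ge 2$ the right-hand side is $\sqrt{1-n}$, which is not even real. So I will prove the statement under the hypothesis that $G$ has no isolated vertices.

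\textbf{Step 1 (one component).} The spectral radius of a disjoint union is the maximum over its components. Moreover, if $G$ has components $G_j$ with $n_j\ge 2$ vertices, then $2e(G)-n+1\ge 2e(G_j)-n_j+1$. This holds because every other component $G_k$ satisfies $2e(G_k)-n_k\ge 2(n_k-1)-n_k\ge 0$. It therefore suffices to treat a connected $G$. For connected $G$, Perron--Frobenius gives a positive unit eigenvector $x$ with $Ax=\rho x$.

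\textbf{Step 2 (the core estimate).} Fix a vertex $v$. The eigen-equation gives $\rho x_v=\sum_{u\sim v}x_u$. By Cauchy--Schwarz, together with the unit norm of $x$,
\[
\rho^2x_v^2\le d(v)\sum_{u\sim v}x_u^2\le d(v)\Bigl(1-x_v^2-\sum_{u\not\sim v,\,u\ne v}x_u^2\Bigr)\le d(v)(1-x_v^2).
\]
Summing this directly would only give $\rho^2\le\Delta$, which is too weak. The better route goes through $A^2$. We have $\rho^2=x^TA^2x=\sum_v\bigl(\sum_{u\sim v}x_u\bigr)^2$, and the $v$-th term is at most $d(v)\sum_{u\sim v}x_u^2$. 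Hence
\[
\rho^2\le\sum_u x_u^2\sum_{v\sim u}d(v)=\sum_u x_u^2\,m(u),
\]
where $m(u)$ is the sum of the degrees of the neighbours of $u$.

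\textbf{Step 3 (bounding $m(u)$).} The edges counted in $m(u)$ are the $d(u)$ edges at $u$ together with the edges joining $N(u)$ to the rest of the graph. This gives $m(u)\le d(u)+2e(G-u)-e'$, where $e'$ is the number of edges of $G-u$ not incident to $N(u)$. Each vertex $w\notin N[u]$ is non-isolated, so it has an edge. Charging these vertices carefully gives $m(u)\le 2e-(n-1-d(u))-d(u)+\dots$, which should yield $m(u)\le 2e-n+1$. Combining this with $\sum_u x_u^2=1$ then gives $\rho^2\le 2e-n+1$.

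\textbf{Main obstacle.} Step 3 is the delicate part: the charging must avoid double counting. The cleanest form I would aim for is the estimate
\[
m(u)=\sum_{v\sim u}d(v)\le 2e-\sum_{w\ne u,\ w\notin N(u)}d(w)-(d(u)-?)\dots
\]
More simply, write $2e=\sum_w d(w)$. Then $m(u)$ counts $d(u)$ through the edges at $u$, while the remaining mass is $\sum_{w\notin N(u)} d(w)\ge d(u)+(n-1-d(u))$, since every $w$ has $d(w)\ge 1$. Thus $m(u)\le 2e-d(u)-(n-1-d(u))=2e-n+1$. This resolves Step 3 directly, since $\sum_{v\in N(u)}d(v)=2e-\sum_{w\notin N(u)}d(w)$.
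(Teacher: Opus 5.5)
The paper does not prove this lemma; it quotes it from Hong~\cite{30}. Your argument is correct and is essentially Hong's original proof. It applies Cauchy--Schwarz to $\rho x_v=\sum_{u\sim v}x_u$, sums over $v$, and then uses the estimate $\sum_{w\notin N(u)}d(w)\ge d(u)+(n-1-d(u))$, which is exactly where the absence of isolated vertices enters. You are right that this hypothesis must be added to the lemma as printed. Besides $\overline{K_n}$, the graph $K_3\cup K_1$ has $\rho=2>\sqrt{3}=\sqrt{2e-n+1}$. The hypothesis does hold in the paper's only use of the lemma, since there it is applied to $G_1=K_s\vee(\cdots)$ with $s\ge\delta+1$, which is connected.

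Three small corrections to the write-up. First, Step 1 and the positivity of the Perron vector are not needed. Step 2 works verbatim for any real unit eigenvector of $\lambda_1$. Steps 2--3 together just say that $\rho(A)^2=\rho(A^2)$ is at most the largest row sum of $A^2$, namely $\max_u m(u)$, and then that $\max_u m(u)\le 2e-n+1$; this holds for every graph without isolated vertices, connected or not. Second, your aside that summing the first display ``would only give $\rho^2\le\Delta$'' is inaccurate. Summing it with the non-neighbour terms kept, and then interchanging $\sum_v d(v)\sum_{u\notin N[v]}x_u^2=\sum_u x_u^2\sum_{v\notin N[u]}d(v)$, is precisely Hong's route and gives the same bound; your $m(u)$ reorganisation is the same double sum. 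Third, in a final version drop the exploratory first half of Step 3 (the lines with `?' and `$\dots$'). Keep only the identity $m(u)=2e-\sum_{w\notin N(u)}d(w)$ and the two-line estimate that follows it.
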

\begin{lemma}{\em \cite{31}}
Let $G$ be a connected graph and let $H$ be a subgraph of $G$. Then $\rho(G) \ge \rho(H)$,where equality holds if and only if $G = H$.
\end{lemma}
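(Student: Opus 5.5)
The plan is a Perron--Frobenius / Rayleigh quotient argument, done in two steps: first the inequality, then the equality case. Throughout, $n=|V(G)|$. If $V(H)\subsetneq V(G)$, I first add the missing vertices to $H$ as isolated vertices. This gives a spanning subgraph $H'$ of $G$ with $\rho(H')=\rho(H)$, since adding isolated vertices only adjoins zero eigenvalues. Writing $A(H')$ and $A(G)$ in the same vertex ordering, we then have $0\le A(H')\le A(G)$ entrywise.

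For the inequality, take a unit nonnegative eigenvector $x$ of $A(H')$ for $\rho(H')$; such a vector exists by Perron--Frobenius for nonnegative symmetric matrices. Then
\[
\rho(H)=x^{T}A(H')x\le x^{T}A(G)x\le \max_{\|y\|=1}y^{T}A(G)y=\rho(G).
\]
The first inequality holds because $x\ge 0$ and $A(G)-A(H')\ge 0$ entrywise. The second is the Rayleigh--Ritz characterization of the largest eigenvalue of the symmetric matrix $A(G)$.

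For the equality case, suppose $\rho(G)=\rho(H)$. Then both inequalities above are equalities. Equality in Rayleigh--Ritz forces $x$ to be an eigenvector of $A(G)$ for $\rho(G)$. Since $G$ is connected, $A(G)$ is irreducible, so this eigenspace is spanned by the Perron vector, which is strictly positive. Hence every entry $x_v$ is positive. Equality in the first step gives
\[
\sum_{u,v}\bigl(A(G)-A(H')\bigr)_{uv}x_ux_v=0,
\]
a sum of nonnegative terms with all $x_ux_v>0$. Therefore $A(G)=A(H')$, that is, every edge of $G$ lies in $H$.

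It remains to recover $V(H)=V(G)$. For $n\ge 2$, connectivity means every vertex of $G$ is incident to an edge, and that edge lies in $H$, so every vertex lies in $V(H)$; thus $H=G$. The case $n=1$ is trivial. The converse direction ($H=G$ gives equality) is immediate.

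The one delicate point is the equality case: it needs the strict positivity of the Perron vector of $A(G)$ (using irreducibility from connectivity), together with the bookkeeping of padding $H$ to a spanning subgraph. Everything else is routine.
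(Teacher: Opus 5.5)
Your proof is correct and complete. The paper gives no proof of this lemma; it is quoted from \cite{31}. What you have written is the standard Perron--Frobenius/Rayleigh-quotient proof of that cited fact.

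The inequality comes from the nonnegative Perron vector $x$ of the padded matrix $A(H')$, which gives $\rho(H)\le x^{T}A(G)x\le\rho(G)$. In the equality case, $x$ must be an eigenvector of $A(G)$ for $\rho(G)$. Irreducibility of $A(G)$ then makes $x$ strictly positive, which forces $A(H')=A(G)$. Connectivity finally gives $V(H)=V(G)$.

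The padding step is more than bookkeeping. In Case 2 of the proof of Theorem 1.3, the paper applies the strict form of the lemma to the non-spanning subgraph $K_{n-\lfloor 2\delta/3\rfloor-1}$ of $K_\delta\vee\bigl(K_{n-\lfloor 5\delta/3\rfloor-1}\cup(\lfloor 2\delta/3\rfloor+1)K_1\bigr)$. So the version you prove, with $H$ not necessarily spanning, is exactly the one the paper relies on.
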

\textit{Proof of Theorem 1.3.}
To the contrary assume, that $G$ has no $\{P_3,P_4,P_5\}$-factor. By Theorem 1.1, there exists a vertex subset $S\subseteq V(G)$ such $i(G-S) > \frac{2}{3}|S|$. Let $|S|=s$ and denote $i(G-S)=i$. Clearly $i(G-S)$ is a nonnegative integer. Hence, $i \ge \left\lfloor \frac{2}{3}s \right\rfloor +1$. It follows that $G$ is a spanning subgraph of $G_1 = K_s \vee \left(K_{\,n-\lfloor \frac{5}{3}s \rfloor -1}
\cup \left(\left\lfloor \frac{2}{3}s \right\rfloor +1\right)K_1\right)$.
Since $\delta(G)=\delta$, we have $\delta(G_1)=s \ge \delta$.
Applying Lemma 3.2 yields
\begin{align}
\rho(G) \le \rho(G_1)
\tag{3.1}
\end{align}
where equality holds if and only if $G=G_1$.
Next, we consider two cases depending on the value of $s$.\\

\noindent\textbf{Case 1.} $s=\delta$. In this situation,
\[
G_1 = K_\delta \vee \left(K_{\,n-\lfloor \frac{5}{3}\delta \rfloor -1}
\cup \left(\left\lfloor \frac{2}{3}\delta \right\rfloor +1\right)K_1\right).
\]

\noindent Combining this with (3.1), we obtain
\[
\rho(G) \le
\rho\!\left(
K_\delta \vee
\left(
K_{\,n-\lfloor \frac{5}{3}\delta \rfloor -1}
\cup
\left(\left\lfloor \frac{2}{3}\delta \right\rfloor +1\right)K_1
\right)
\right).
\]
This contradicts the assumption of the theorem.\\

\noindent\textbf{Case 2.} $s \geq \delta + 1$.
Observe that $K_{\delta} \vee \left( K_{\,n-\left\lfloor \frac{5\delta}{3}\right\rfloor -1}
\cup \left(\left\lfloor \tfrac{2\delta}{3}\right\rfloor +1\right)K_1 \right)$
contains $K_{\,n-\left\lfloor \frac{2\delta}{3}\right\rfloor -1}$ as a proper subgraph.
By Lemma 3.2 together with the hypothesis of the theorem, we obtain
\begin{align}
\rho(G)
&> \rho\!\left(
K_{\delta} \vee
\left(
K_{\,n-\left\lfloor \frac{5\delta}{3}\right\rfloor -1}
\cup
\left(\left\lfloor \tfrac{2\delta}{3}\right\rfloor +1\right)K_1
\right)
\right) \notag\\
&> \rho\!\left(K_{\,n-\left\lfloor \frac{2\delta}{3}\right\rfloor -1}\right) \notag\\
&= n-\left\lfloor \frac{2\delta}{3}\right\rfloor-2.
\tag{3.2}
\end{align}

\noindent Now, let $G_1 = K_s \vee \left( K_{\,n-\left\lfloor \frac{5s}{3}\right\rfloor -1}
\cup (\,\left\lfloor \tfrac{2s}{3}\right\rfloor +1\,)K_1 \right)$.
Applying Lemma 3.1 yields
\begin{align*}
\rho(G_1)
&\leq \sqrt{\,2e(G_1)-n+1\,} \\
&= \sqrt{
\left(n-\left\lfloor \tfrac{2s}{3}\right\rfloor -1\right)
\left(n-\left\lfloor \tfrac{2s}{3}\right\rfloor -2\right)
+2s\left(\left\lfloor \tfrac{2s}{3}\right\rfloor +1\right)
-n+1 }. \tag{3.3}
\end{align*}
\noindent We have the following subcases.\\
\noindent\textbf{Subcase 2.1.} $s \equiv 0 \pmod{3}$.
In this case, $\left\lfloor \frac{2s}{3} \right\rfloor=\frac{2s}{3}$.
Combining this with (3.3), we obtain
\begin{align}
\rho(G_1)
&\le \sqrt{\left(n-\left\lfloor \frac{2s}{3} \right\rfloor-1\right)
           \left(n-\left\lfloor\frac{2s}{3}\right\rfloor-2\right)
           +2s\left(\left\lfloor\frac{2s}{3}\right\rfloor+1\right)-n+1} \notag\\
&= \sqrt{\left(n-\frac{2s}{3}-1\right)
         \left(n-\frac{2s}{3}-2\right)
         +2s\left(\frac{2s}{3}+1\right)-n+1} \notag\\
&= \frac{1}{3}\sqrt{9n^2-12ns-36n+16s^2+36s+27}.
\tag{3.4}
\end{align}

\noindent Let $g(s)=16s^2-(12n-36)s+9n^2-36n+27$. Since $n \ge \left\lfloor \frac{5s}{3} \right\rfloor+1=\frac{5s}{3}+1$,
it follows that $\delta+1 \le s \le \frac{3n-3}{5}$.
A straightforward calculation gives
$ g(\delta+1)-g\!\left(\frac{3n-3}{5}\right)
=\frac{4}{25}(3n-20\delta-53)(3n-5\delta-8)$. Hence, when $n \ge \max\!\left\{\frac{6{\delta}^2+22\delta+31}{6},\,\frac{20\delta+56}{3}\right\}$,
we have $g(\delta+1) \ge g\!\left(\frac{3n-3}{5}\right)$. Therefore $g(s)$ attains its maximum at $\delta+1$, and by (3.1), (3.4) and  $n \ge \max\!\left\{\frac{6{\delta}^2+22\delta+31}{6},\,\frac{20\delta+56}{3}\right\}$, we obtain
\[
\begin{aligned}
\rho(G)
&\le \rho(G_1)
\le \frac{1}{3}\sqrt{g(\delta+1)} \\
&= \frac{1}{3}\sqrt{9n^2-12n\delta-48n+16{\delta^2+68\delta+79}} \\
&= \frac{1}{3}\sqrt{
9\left(n-\left\lfloor\frac{2}{3}\delta\right\rfloor-2\right)^2
-6n\left(2\delta -3\left\lfloor\frac{2}{3}\delta\right\rfloor+2\right)
-9\left\lfloor\frac{2}{3}\delta\right\rfloor\left(\left\lfloor\frac{2}{3}\delta\right\rfloor+4\right)
+16\delta^2+68\delta+43
} \\[4pt]
&\le \frac{1}{3}\sqrt{
9\left(n-\left\lfloor\frac{2}{3}\delta\right\rfloor-2\right)^2 -6n\left(2\delta -3\frac{(2\delta-1)}{3}+2\right)
-9\left(\frac{2\delta-2}{3}\right)\left(\frac{2\delta-2}{3}+4\right)
+16\delta^2+68\delta+43
}\\[4pt]
&= \frac{1}{3}\sqrt{
9\left(n-\left\lfloor\frac{2}{3}\delta\right\rfloor-2\right)^2
-18n+12\delta^2+52\delta+63
} \\[4pt]
&< n-\left\lfloor\frac{2}{3}\delta\right\rfloor-2 .
\end{aligned}
\]
This contradicts (3.2). \\

\noindent\textbf{Subcase 2.2.} $s \equiv 1 \pmod{3}$. In this case, $\left\lfloor \frac{2s}{3} \right\rfloor=\frac{2s-2}{3}$. Substituting this relation in (3.3), we obtain
\begin{align}
\rho(G_1)
&\le \sqrt{\left(n-\frac{2s}{3}-\frac{1}{3}\right)
\left(n-\frac{2s}{3}-\frac{4}{3}\right)
+2s\left(\frac{2s}{3}+\frac{1}{3}\right)-n+1} \notag\\
&= \frac{1}{3}\sqrt{9n^2-12ns-24n+16s^2+16s+13}.
\tag{3.5}
\end{align}

\noindent Define $g(s)=16s^2-(12n-16)s+9n^2-24n+13$. Because $n \ge \left\lfloor \frac{5\delta}{3} \right\rfloor +1
= \frac{5\delta-2}{3}+1$, and since $\delta \not\equiv 0 \pmod{3}$ while
$s \equiv 1 \pmod{3}$, it follows that $\delta+2 \le s \le \frac{3n-1}{5}$. A direct computation shows that $g(\delta+2)-g\!\left(\frac{3n-1}{5}\right)
=\frac{4}{25}(3n-20\delta-56)(3n-5\delta-11)$. Hence, whenever
 $n \ge \max\!\left\{\frac{6{\delta}^2+22\delta+31}{6},\,\frac{20\delta+56}{3}\right\}$, the inequality
$ g(\delta+2) \ge g\left(\frac{3n-1}{5}\right)$
holds. Consequently, the maximum of $g(s)$ is achieved at $s=\delta+2$.
Combining (3.1) with (3.5) and using   $n \ge \max\!\left\{\frac{6{\delta}^2+22\delta+31}{6},\,\frac{20\delta+56}{3}\right\}$, yields
\[
\begin{aligned}
\rho(G)
&\le \rho(G_1)
\le \frac{1}{3}\sqrt{g(\delta+2)} \\
&= \frac{1}{3}\sqrt{9n^2-12n\delta-48n+16{\delta^2+80\delta+109}} \\
&= \frac{1}{3}\sqrt{
9\left(n-\left\lfloor\frac{2}{3}\delta\right\rfloor-2\right)^2
-6n\left(2\delta -3\left\lfloor\frac{2}{3}\delta\right\rfloor+2\right)
-9\left\lfloor\frac{2}{3}\delta\right\rfloor\left(\left\lfloor\frac{2}{3}\delta\right\rfloor+4\right)
+16\delta^2+80\delta+73
} \\[4pt]
&\le \frac{1}{3}\sqrt{
9\left(n-\left\lfloor\frac{2}{3}\delta\right\rfloor-2\right)^2 -6n\left(2\delta -3\frac{(2\delta-1)}{3}+2\right)
-9\left(\frac{2\delta-2}{3}\right)\left(\frac{2\delta-2}{3}+4\right)
+16\delta^2+80\delta+73
}\\[4pt]
&= \frac{1}{3}\sqrt{
9\left(n-\left\lfloor\frac{2}{3}\delta\right\rfloor-2\right)^2
-18n+12\delta^2+64\delta+93
} \\[4pt]
&< n-\left\lfloor\frac{2}{3}\delta\right\rfloor-2 .
\end{aligned}
\]
This contradicts inequality (3.2).\\

\noindent\textbf{Subcase 2.3.} $s \equiv 2 \pmod{3}$. In this case, $\left\lfloor \frac{2}{3}s \right\rfloor = \frac{2s-1}{3}$
Combining these identities with (3.3), we derive
\begin{align}
\rho(G_1)
&\le
\sqrt{\left(n-\left\lfloor \frac{2}{3}s \right\rfloor-1\right)\left(n-\left\lfloor \frac{2}{3}s \right\rfloor-2\right)+2s\left(\left\lfloor \frac{2}{3}s \right\rfloor+1\right)-n+1} \notag\\
&=
\sqrt{\left(n-\frac{2}{3}s-\frac{2}{3}\right)\left(n-\frac{2}{3}s-\frac{5}{3}\right)+2s\left(\frac{2}{3}s+\frac{2}{3}\right)-n+1} \notag\\
&=
\frac{1}{3}\sqrt{9n^2-12ns-30n+16s^2+26s+19}.
\tag{3.6}
\end{align}

\noindent Let $g(s)=16s^2-(12n-26)s+9n^2-30n+19$. Since $n \ge \left\lfloor \frac{5}{3}s \right\rfloor +1
= \frac{5s-1}{3}+1$, it follows that $\delta+1 \le s \le \frac{3n-2}{5}$. Hence, by a straightforward calculation, we obtain\\
$g(\delta+1)-g(\frac{3n-2}{5})=\frac{2}{25}(3n-5\delta-7)(6n-40\delta-89)$, for  $n \ge \max\!\left\{\frac{6{\delta}^2+22\delta+31}{6},\,\frac{20\delta+56}{3}\right\}$. We have $ g(\delta+1)\geq g(\frac{3n-2}{5})$. So, $g(s)$ has maximum at $\delta+1$. Thus from (3.1) and (3.6) and $n \ge \max\left\{\frac{6{\delta}^2+22\delta+31}{6},\,\frac{20\delta+56}{3}\right\}$, we have
\[
\begin{aligned}
\rho(G)
&\le \rho(G_1)
\le \frac{1}{3}\sqrt{g(\delta+1)} \\
&= \frac{1}{3}\sqrt{9n^2-12n\delta-42n+16{\delta^2+58\delta+61}} \\
&= \frac{1}{3}\sqrt{
9\left(n-\left\lfloor\frac{2}{3}\delta\right\rfloor-2\right)^2
-6n\left(2\delta -3\left\lfloor\frac{2}{3}\delta\right\rfloor+1\right)
-9\left\lfloor\frac{2}{3}\delta\right\rfloor\left(\left\lfloor\frac{2}{3}\delta\right\rfloor+4\right)
+16\delta^2+58\delta+25
} \\[4pt]
&\le \frac{1}{3}\sqrt{
9\left(n-\left\lfloor\frac{2}{3}\delta\right\rfloor-2\right)^2 -6n\left(2\delta -3\frac{(2\delta-1)}{3}+1\right)
-9\left(\frac{2\delta-2}{3}\right)\left(\frac{2\delta-2}{3}+4\right)
+16\delta^2+58\delta+25
}\\[4pt]
&= \frac{1}{3}\sqrt{
9\left(n-\left\lfloor\frac{2}{3}\delta\right\rfloor-2\right)^2
-12n+12\delta^2+42\delta+55
} \\[4pt]
&< n-\left\lfloor\frac{2}{3}\delta\right\rfloor-2 .
\end{aligned}
\]
which contradicts (3.2) \qed
\noindent Using an approach analogous to that of Remark 2.1, we obtain the following remark.\\

\noindent \textbf{Remark 3.1} The condition  $\rho(G) > \rho\left(K_\delta \vee \left( K_{\,n-\left\lfloor \frac{5}{3}\delta \right\rfloor -1}
\cup \left(\left\lfloor \frac{2}{3}\delta \right\rfloor +1\right)K_1 \right)\right)$ in Theorem 1.3 cannot be weakened to $\rho(G) \geq \rho\left(K_\delta \vee \left( K_{\,n-\left\lfloor \frac{5}{3}\delta \right\rfloor -1}
\cup \left(\left\lfloor \frac{2}{3}\delta \right\rfloor +1\right)K_1 \right)\right).$ To see this, consider the choice $S = V(K_\delta)$ in the graph $K_\delta \vee \left( K_{\,n-\left\lfloor \frac{5}{3}\delta \right\rfloor -1}
\cup \left(\left\lfloor \frac{2}{3}\delta \right\rfloor +1\right)K_1 \right)$. Then $|S| = \delta$, and we obtain, $i(G - S) = \left\lfloor \frac{2}{3}\delta \right\rfloor + 1 \geq \frac{2\delta}{3} + \frac{1}{3} = \frac{2|S|}{3} + \frac{1}{3} > \frac{2}{3}|S|$. Hence, by Theorem 1.1, the graph contains no $\{P_3, P_4, P_5\}$-factor. \qed
\noindent{\bf Conflict of interest.} The authors declare that they have no conflict of interest.\\

\noindent{\bf Data Availability} Data sharing is not applicable to this article as no datasets were generated or analyzed
during the current study.\\

\end{document}